\newtheorem{thm}{Theorem}
\newtheorem{lem}[thm]{Lemma}
\newtheorem{fact}[thm]{Fact}
\theoremstyle{definition}
\newtheorem{defi}[thm]{Definition}
\newtheorem{exam}[thm]{Example}
\newtheorem{rem}[thm]{Remark}
\numberwithin{equation}{section}
\newcommand{\mb}{\mathbb}
\DeclareMathOperator{\ext}{ext}
\DeclareMathOperator{\id}{id}
\def\mc{\mathcal}
\def\mb{\mathbb}
\begin{document}
\title[Hofmann-Lawson duality for locally small spaces]{Hofmann-Lawson  duality for locally small spaces}
\author[A. Pi\k{e}kosz]{Artur Pi\k{e}kosz} 
\address{{\rm Department of Applied Mathematics, Cracow University of Technology,\\
 ul. Warszawska 24, 31-155 Krak\'ow,  Poland\\
 email: \textit{pupiekos@cyfronet.pl}
 }}

\date{\today}
\keywords{Hofmann-Lawson duality, Stone-type duality, locally small space, spatial frame, continuous frame.}

\subjclass[2010]{Primary: 06D22, 06D50. Secondary: 54A05, 18F10.}

\begin{abstract}
We prove versions of the spectral adjunction, a Stone-type duality and Hofmann-Lawson duality
for locally small spaces with bounded continuous mappings. 
\end{abstract}
\maketitle
\section{Introduction}
Hofmann-Lawson duality belongs to the most important dualities in lattice theory.
It was stated in \cite{HL} (with the reference to the proof in \cite{HK}).
Its versions for various types of structures are numerous (see, for example, \cite{E}).

The concept of a  locally small space comes from that of  Grothendieck topology through  generalized topological spaces in the sense of Delfs and Knebusch (see \cite{DK,P1,P}). Locally small spaces were used in o-minimal homotopy theory (\cite{DK,P3}) as underlying structures of locally definable spaces.  A simple language for locally small spaces was introduced and used in \cite{P2} and \cite{P}, compare also \cite{PW}. It is analogical to  the language of  Lugojan's generalized topology (\cite{Lu}) or Cs\'{a}sz\'{a}r's generalized topology (\cite{C}) where a family of subsets of the underlying set is satisfying some, but not all, conditions for a topology.
However treating locally small spaces as topological spaces with additional structure seems to be  more useful. 

While Stone duality for locally small spaces  was discussed in \cite{P-stone}, we consider in this paper the spectral adjunction, a Stone-type duality and Hofmann-Lawson duality. 

The set-theoretic axiomatics of this paper is what Saunders Mac Lane calls the standard
Zermelo-Fraenkel axioms (actually with the axiom of choice) for set theory plus the existence of a set which is a universe, see \cite{MacLane}, page 23.

\textbf{Notation.}
We shall use a special notation for family intersection
$$ \mc{U} \cap_1 \mc{V} = \{ U \cap  V : U \in \mc{U}, V \in \mc{V} \}.$$ 


\section{Preliminaries on frames and their spectra.}
First,  we set the notation and collect basic material on  frame theory  that can be found in \cite{G2, CLD, HL, PP,PS}.

\begin{defi}
A \textit{frame} is a complete distributive lattice $L$ satisfying the (possibly infinite) distributive law $a\wedge \bigvee_{i\in I} b_i =\bigvee_{i\in I} (a\wedge b_i)$
for $a, b_i \in L$ and $i\in I$.
\end{defi}
\begin{defi}A \textit{frame homomorphism}  is a lattice homomorphism between frames preserving all joins. The category of frames and frame homomorphisms will be denoted by 
$\mathbf{Frm}$. The category of frames and right Galois adjoints of frame homomorphisms will be denoted by $\mathbf{Loc}$.
\end{defi}

\begin{defi}
For a frame $L$, its \textit{spectrum} $Spec(L)$ is the set of non-unit primes of $L$:
$$Spec(L)=\{ p\in L\setminus \{1\}:(p=a\wedge b)\implies (p=a \mbox{ or } p=b)\}.$$
For $a\in L$, we define
$$\Delta_L(a)=\{ p\in Spec(L): a\not\leq p\}.$$
For a subset $S\subseteq L$, we set
$$\Delta_L(S)=\{\Delta_L(a):a\in S \}.$$
\end{defi}

\begin{fact}[{{\cite[Prop. V-4.2]{G2}}}] \label{topol}
For any frame $L$ and $a,b,a_i \in L$ for $i\in I$, we have
$$ \Delta_L(0)=\emptyset, \qquad \Delta_L(1)=Spec(L),$$
$$\Delta_L(\bigvee_{i\in I}  a_i) = \bigcup_{i\in I} \Delta_L(a_i), \quad \Delta_L(a\wedge b)=\Delta_L(a) \cap \Delta_L(b).$$
Consequently, the mapping $\Delta_L: L\ni a\mapsto \Delta_L(a)\in \Delta_L(L)$ is a surjective frame homomorphism.
\end{fact}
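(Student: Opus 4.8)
The plan is to verify the four displayed identities one at a time, viewing $\Delta_L$ as a map into the powerset lattice of $Spec(L)$, and then to read off the final consequence; throughout I would use only the defining equivalence $p\in\Delta_L(a)\iff a\not\leq p$ together with the order structure, noting that distributivity and primality enter at exactly one place. The two boundary cases are immediate. Since $0\leq p$ for every $p$, the condition $0\not\leq p$ is never satisfied, so $\Delta_L(0)=\emptyset$. Since $1\not\leq p$ holds precisely when $p\neq 1$, and every element of $Spec(L)$ is by definition a non-unit, every $p$ satisfies $1\not\leq p$, giving $\Delta_L(1)=Spec(L)$. For the join identity I would argue both inclusions directly: if $a_i\not\leq p$ for some $i$ then $\bigvee_{i\in I}a_i\not\leq p$ because $a_i\leq\bigvee_{i\in I}a_i$, which gives $\bigcup_{i\in I}\Delta_L(a_i)\subseteq\Delta_L(\bigvee_{i\in I}a_i)$; conversely, by contraposition, if $a_i\leq p$ for all $i$ then $p$ is an upper bound of the family, so $\bigvee_{i\in I}a_i\leq p$ and $p\notin\Delta_L(\bigvee_{i\in I}a_i)$. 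This half uses only completeness, not primality.

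The meet identity carries the real content. The inclusion $\Delta_L(a\wedge b)\subseteq\Delta_L(a)\cap\Delta_L(b)$ is automatic from $a\wedge b\leq a$ and $a\wedge b\leq b$: if $a\leq p$ then $a\wedge b\leq p$, so $a\wedge b\not\leq p$ forces $a\not\leq p$, and symmetrically for $b$. The reverse inclusion is the main obstacle, and it is where the distributive law and primality of $p$ are used together. Assuming $a\not\leq p$ and $b\not\leq p$, suppose for contradiction that $a\wedge b\leq p$. Then $p=p\vee(a\wedge b)=(p\vee a)\wedge(p\vee b)$ by the (finite) distributive law, and since $p$ is meet-irreducible this yields $p=p\vee a$ or $p=p\vee b$, i.e.\ $a\leq p$ or $b\leq p$, contradicting the hypothesis. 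Hence $a\wedge b\not\leq p$, which establishes $\Delta_L(a)\cap\Delta_L(b)\subseteq\Delta_L(a\wedge b)$. (Implicitly this is the standard fact that in a distributive lattice meet-irreducible elements coincide with meet-primes, $a\wedge b\leq p\implies a\leq p$ or $b\leq p$.)

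For the concluding statement, the four identities say that $\Delta_L(L)$ contains $\emptyset$ and $Spec(L)$ and is closed under arbitrary unions and binary intersections, hence under all finite intersections; thus $\Delta_L(L)$ is a topology on $Spec(L)$ and in particular a frame, with joins given by union and finite meets by intersection. Reading the same four identities in the other direction, $\Delta_L$ sends $0$ to the bottom, $1$ to the top, arbitrary joins to unions and binary meets to intersections, so $\Delta_L\colon L\to\Delta_L(L)$ preserves all joins and finite meets and is therefore a frame homomorphism. It is surjective by the very definition of its codomain $\Delta_L(L)=\{\Delta_L(a):a\in L\}$. The only genuinely nontrivial step is the reverse inclusion in the meet identity; the rest is order-theoretic bookkeeping.
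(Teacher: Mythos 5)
Your proof is correct, and it is exactly the standard argument: the paper itself offers no proof of this statement, quoting it as a known Fact from \cite[Prop.\ V-4.2]{G2}, and your verification reproduces the canonical textbook reasoning. In particular you correctly isolate the one nontrivial point --- the inclusion $\Delta_L(a)\cap\Delta_L(b)\subseteq\Delta_L(a\wedge b)$, which needs the distributive law $p\vee(a\wedge b)=(p\vee a)\wedge(p\vee b)$ together with the meet-irreducibility of $p$ (the definition of prime used in this paper) --- while everything else is order-theoretic bookkeeping, so nothing is missing.
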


\begin{defi}  
The set $\Delta_L (L)$ is a topology on $Spec(L)$, called the \textit{hull-kernel topology}.
\end{defi}

\begin{rem}
In this paper (as opposed to \cite{P-stone}), being sober implies being $T_0$ (Kolmogorov).
\end{rem}

\begin{fact}[{{\cite[Ch. II, Prop. 6.1]{PP}, \cite[V-4.4]{G2}}}] \label{sober}
For any frame $L$,  the topological space $(Spec(L),\Delta_L(L))$ is   sober.
\end{fact}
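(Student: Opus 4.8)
The plan is to verify directly the two ingredients packaged in soberness: that $(Spec(L),\Delta_L(L))$ is $T_0$, and that every nonempty irreducible closed set is the closure of a point (necessarily unique once $T_0$ is known). I would begin by describing the closed sets explicitly. Since the opens are exactly the sets $\Delta_L(a)$, the closed sets are their complements, which I would write as $V(a) := Spec(L) \setminus \Delta_L(a) = \{ p \in Spec(L) : a \leq p \}$. From Fact~\ref{topol} one reads off $V\bigl(\bigvee_i a_i\bigr) = \bigcap_i V(a_i)$, so in particular $\overline{\{p\}} = \bigcap\{V(a) : a \leq p\} = V(p) = \{ q \in Spec(L) : p \leq q\}$. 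Two points with equal closure then satisfy $p \leq q$ and $q \leq p$, whence $p = q$; this settles $T_0$ and, simultaneously, the uniqueness of generic points.

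The arithmetic key I would isolate next is that a non-unit prime $p$ is meet-prime as an order element: if $x \wedge y \leq p$ then $x \leq p$ or $y \leq p$. This follows purely from distributivity, since $(p \vee x) \wedge (p \vee y) = p \vee (x \wedge y) = p$, and primeness of $p$ forces $p = p \vee x$ or $p = p \vee y$. Consequently, for $a = b \wedge c$ one obtains the set-level identity $V(b \wedge c) = V(b) \cup V(c)$: the inclusion $\supseteq$ is immediate from monotonicity, and $\subseteq$ is exactly the meet-prime property applied at each $q \in V(a)$.

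With these tools, here is how I would extract the generic point of a nonempty irreducible closed set $C$. Writing $C = V(a)$, I would first saturate by replacing $a$ with $\bar a := \bigwedge V(a)$; since $a \leq q$ for every $q \in V(a)$ we have $a \leq \bar a$ and hence $V(\bar a) = V(a) = C$, so I may assume $a = \bar a$. Nonemptiness of $C$ forces $a \neq 1$ (as $1 \notin Spec(L)$). To see $a$ is prime, suppose $a = b \wedge c$; then $C = V(a) = V(b) \cup V(c)$ by the previous paragraph, and irreducibility gives $C = V(b)$, say. Taking meets over the equal closed sets then yields $a = \bar a = \bigwedge V(b) \geq b \geq b \wedge c = a$, so $a = b$. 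Thus $a \in Spec(L)$ and $C = V(a) = \overline{\{a\}}$, with uniqueness already in hand from $T_0$.

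The step I expect to be the main obstacle is precisely this last extraction: converting the purely topological hypothesis of irreducibility into the lattice statement that the saturated element $a$ is prime. The saturation trick (passing to $\bar a = \bigwedge V(a)$, so that the closed set remembers a canonical element) is what makes the computation $\bigwedge V(b) \geq b$ bite and collapse $a = b$; without it, the identity $V(a) = V(b)$ would only pin down $a$ and $b$ up to the same closed set rather than as equal elements.
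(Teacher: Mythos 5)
Your proof is correct. The paper itself offers no argument for this Fact---it is imported verbatim from the cited references (Picado--Pultr, Ch.~II, Prop.~6.1 and Gierz et al., V-4.4)---and your write-up is essentially the standard proof found there: identify the closed sets as $V(a)=\{p : a\leq p\}$, observe $\overline{\{p\}}=V(p)$ (which gives $T_0$ and uniqueness), show via distributivity that non-unit primes are meet-prime so that $V(b\wedge c)=V(b)\cup V(c)$, and then saturate an irreducible closed set $C$ to $\bar a=\bigwedge C$ to conclude $\bar a$ is prime and $C=\overline{\{\bar a\}}$. All steps check out, including the two points where care is needed: the legitimacy of replacing $a$ by $\bar a$ (using $V(\bigvee_i a_i)=\bigcap_i V(a_i)$), and the collapse $a=\bar a=\bigwedge V(b)\geq b\geq a$, which is exactly what turns the topological hypothesis of irreducibility into primeness of the lattice element.
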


\begin{defi}
A frame $L$ is \textit{spatial} if
 $Spec(L)$ \textit{order generates} (or: \textit{inf-generates}) $L$, which means that
 each element of $L$ is a meet of primes.
\end{defi}

\begin{fact}[{{\cite[Ch. II, Prop. 5.1]{PP}}}] \label{iso-spatial}
If $L$ is a spatial frame, then $\Delta_L$ is an isomorphism of frames.
\end{fact}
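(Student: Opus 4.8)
The plan is to show that the surjective frame homomorphism $\Delta_L$ supplied by Fact \ref{topol} is injective; since a bijective frame homomorphism is automatically a frame isomorphism (its set-theoretic inverse is again a lattice homomorphism, and it preserves arbitrary joins because $f^{-1}\bigl(\bigvee_i f(a_i)\bigr)=\bigvee_i a_i$ once $f$ is a join-preserving bijection), this suffices. In fact I would prove the slightly stronger statement that $\Delta_L$ is \emph{order-reflecting}, i.e. $\Delta_L(a)\subseteq\Delta_L(b)$ implies $a\leq b$; applying this to both inclusions yields injectivity at once, and surjectivity is already in hand.

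The computational heart is a reformulation of the inclusion $\Delta_L(a)\subseteq\Delta_L(b)$. Unwinding the definition of $\Delta_L$ and passing to contrapositives, this inclusion says precisely that every prime $p\in Spec(L)$ with $b\leq p$ also satisfies $a\leq p$. In other words, $a$ is a lower bound of the set $P_b=\{p\in Spec(L):b\leq p\}$ of all primes lying above $b$.

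The second ingredient is the observation that spatiality forces $b=\bigwedge P_b$ for every $b\in L$. Indeed, by spatiality $b=\bigwedge Q$ for some set $Q$ of primes; each $q\in Q$ satisfies $b\leq q$, so $Q\subseteq P_b$ and hence $\bigwedge P_b\leq\bigwedge Q=b$, while the reverse inequality holds because $b$ is by construction a lower bound of $P_b$. Combining the two ingredients, from $\Delta_L(a)\subseteq\Delta_L(b)$ we conclude that $a$ is a lower bound of $P_b$, whence $a\leq\bigwedge P_b=b$, as required.

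I expect the only genuinely delicate point to be keeping the directions of the inequalities straight when passing to the contrapositive: the condition $a\not\leq p$ defining membership in $\Delta_L(a)$ must be negated carefully, and one must record that $b$ being a meet of primes gives the equality $b=\bigwedge P_b$ rather than merely $b\leq\bigwedge P_b$. Everything else—the surjectivity of $\Delta_L$ and the passage from a bijective frame homomorphism to a frame isomorphism—is either delivered by Fact \ref{topol} or entirely routine.
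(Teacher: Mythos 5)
Your proof is correct and is essentially the standard argument: the paper states this result as a Fact cited from Picado--Pultr (Ch.~II, Prop.~5.1) without reproducing a proof, and your order-reflection argument (spatiality gives $b=\bigwedge P_b$ for $P_b=\{p\in Spec(L):b\leq p\}$, so $\Delta_L(a)\subseteq\Delta_L(b)$ forces $a\leq b$, whence injectivity on top of the surjectivity from Fact~\ref{topol}) is precisely the proof found in that reference. Nothing needs to be added.
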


\begin{defi}[{{\cite[p. 286]{HL}}}]
For $a,b \in L$, we say that  $b$ is \textit{well-below} $a$ (or: $b$ is \textit{way below} $a$) and write $b \ll a$ if for each (up-)directed set $D\subseteq L$ such that $a\leq \sup D$ there exists $d\in D$ such that $b\leq d$. 
\end{defi}
\begin{defi}
A frame is called \textit{continuous} if for each element $a\in L$ we have
$$a=\bigvee \{ b\in  L: b \ll a   \}.$$
\end{defi}

\begin{fact}[{{\cite[Ch. VII, Prop. 6.3.3]{PP}, \cite[I-3.10]{G2}}}] 
Every continuous frame is spatial.
\end{fact}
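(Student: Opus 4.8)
The plan is to deduce spatiality from the following separation property, which is equivalent to it by a routine order-theoretic argument: for all $a,b\in L$ with $a\not\le b$ there is a prime $p\in Spec(L)$ with $b\le p$ and $a\not\le p$. Indeed, if this holds, then for any $a$ the element $m=\bigwedge\{p\in Spec(L):a\le p\}$ satisfies $m\le a$, since otherwise $m\not\le a$ would produce a prime $p\ge a$ with $m\not\le p$, contradicting the fact that $p\ge a$ forces $m\le p$; thus $a=m$ is a meet of primes, and the converse implication is immediate. So it suffices to prove the separation property, and note that the paper's notion of prime is meet-irreducibility, which is what the argument below will verify.

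Fix $a\not\le b$. First I would use continuity: since $a=\bigvee\{u\in L:u\ll a\}\not\le b$, there is some $u\ll a$ with $u\not\le b$. The crux of the proof, and the step I expect to be the main obstacle, is to manufacture from $u\ll a$ a \emph{Scott-open filter} $F$ — an up-closed, finite-meet-closed subset such that $\sup D\in F$ for a directed $D$ forces $D\cap F\ne\emptyset$ — with $a\in F$ and $F\subseteq\{x:u\le x\}$. The naive candidates fail (for instance $\{x:u\le x\}$ need not be Scott-open, and $\{x:u\ll x\}$ need not be meet-closed), so one must invoke the \emph{interpolation property} of $\ll$, valid in every continuous frame: $u\ll a$ yields $w$ with $u\ll w\ll a$. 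Iterating interpolation in Urysohn fashion, I would build a chain $(c_r)$ indexed by the rationals in $[0,1]$ with $c_0=u$, $c_1=a$ and $c_r\ll c_s$ whenever $r<s$, and then set $F=\{x\in L:\exists r>0,\ c_r\le x\}$. One checks directly that $F$ is up-closed and, being generated by a chain, finite-meet-closed; that $a=c_1\in F$ and $F\subseteq\{x:u\le x\}$ since $u=c_0\le c_r$; and that $F$ is Scott-open, for if $\sup D\ge c_r$ then picking $0<r'<r$ gives $c_{r'}\ll c_r\le\sup D$, so some $d\in D$ satisfies $c_{r'}\le d$, whence $d\in F$.

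Once $F$ is in hand the rest is a Zorn's lemma argument. Since $u\not\le b$ and $F\subseteq\{x:u\le x\}$ we have $b\notin F$, so the poset $\Sigma=\{x\in L:b\le x,\ x\notin F\}$ is nonempty. Its chains are bounded in $\Sigma$: the supremum $s$ of a chain $C\subseteq\Sigma$ satisfies $b\le s$, and $s\notin F$, because Scott-openness would otherwise place some $c\in C$ in $F$. A maximal element $p\in\Sigma$ is then prime in the sense of the paper: if $p=x\wedge y$ with $x\ne p\ne y$, then $x,y>p\ge b$, so by maximality $x,y\notin\Sigma$, forcing $x,y\in F$, and meet-closure of $F$ gives $p=x\wedge y\in F$, a contradiction; moreover $p\ne 1$ since $1\in F$ while $p\notin F$. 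Finally $a\not\le p$, for $a\le p$ together with $a\in F$ and up-closedness of $F$ would give $p\in F$. Thus $p$ is the required prime, and the separation property — hence spatiality — follows.
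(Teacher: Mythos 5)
Your proof is correct: the reduction of spatiality to the separation property, the interpolation-based Urysohn construction of a Scott-open filter $F$ with $a\in F\subseteq\{x: u\le x\}$, and the Zorn's-lemma argument that a maximal element of $\{x: b\le x,\ x\notin F\}$ is meet-irreducible (in the paper's sense of prime) are all sound, and together they constitute the standard proof of this fact. The paper itself offers no proof, citing \cite[Ch. VII, Prop. 6.3.3]{PP} and \cite[I-3.10]{G2}, and your argument is essentially the proof found in those references; the only stylistic remark is that the full chain indexed by the rationals is more than needed, since a countable interpolating sequence $u\ll\cdots\ll u_2\ll u_1\ll a$ already yields the required Scott-open filter.
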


The following two facts lead to Hofmann-Lawson duality.

\begin{fact}[{{\cite[Thm. V-5.5]{G2}, \cite[Ch. VII, 6.4.2]{PP}}}] \label{6.4.2}
If $L$ is  a  continuous frame, then $(Spec(L),\Delta_L(L))$ is a sober locally compact topological space
and $\Delta_L$ is an isomorphism of frames.
\end{fact}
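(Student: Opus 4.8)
The plan is to read off soberness and the isomorphism from the facts already collected, and to concentrate all the work on local compactness. Soberness of $(Spec(L),\Delta_L(L))$ is exactly Fact \ref{sober}. For the isomorphism statement I would combine the cited fact that every continuous frame is spatial with Fact \ref{iso-spatial}, which gives that $\Delta_L$ is an isomorphism of frames as soon as $L$ is spatial; in particular $\Delta_L$ is then an order isomorphism of $L$ onto the topology, a fact I will use repeatedly below to translate lattice-theoretic statements into topological ones and back. So only local compactness of the hull-kernel space requires a genuine argument.

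For local compactness I would show that every $p\in Spec(L)$ has a neighbourhood basis of compact sets inside each given open neighbourhood. Fix $p$ and an open neighbourhood $\Delta_L(a)$, so that $a\not\le p$. Continuity of $L$ gives $a=\bigvee\{b\in L:b\ll a\}$, and the join formula of Fact \ref{topol} turns this into $\Delta_L(a)=\bigcup\{\Delta_L(b):b\ll a\}$; hence there is some $b\ll a$ with $p\in\Delta_L(b)\subseteq\Delta_L(a)$. The key lemma I would prove is: whenever $b\ll a$, there is a compact saturated set $Q$ with $\Delta_L(b)\subseteq Q\subseteq\Delta_L(a)$; granting it, $Q$ is a compact neighbourhood of $p$ contained in $\Delta_L(a)$, and letting $a$ range over the neighbourhoods of $p$ produces the required basis. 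To build $Q$ I would set $Q=\bigcap\{\Delta_L(c):b\ll c\}$, an intersection of open sets and so saturated. Since $b\ll c$ implies $b\le c$, each $\Delta_L(c)$ contains $\Delta_L(b)$, whence $\Delta_L(b)\subseteq Q$; and the interpolation property of continuous frames supplies a $c_0$ with $b\ll c_0\ll a$, so that $Q\subseteq\Delta_L(c_0)\subseteq\Delta_L(a)$. Thus the containments are routine, and the content is compactness of $Q$.

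The heart of the matter, and the step I expect to be the main obstacle, is compactness of $Q$. Here I would exploit that $\Delta_L$ is an order isomorphism: an open cover $\{\Delta_L(d_i)\}_{i\in I}$ of $Q$, which we may assume directed by passing to finite joins, satisfies $Q\subseteq\Delta_L(e)$ for $e=\bigvee_i d_i$. The crux is the implication that $Q\subseteq\Delta_L(e)$ already forces $b\ll e$; once this is known, interpolation yields a $c'$ with $b\ll c'\ll e$, directedness gives $c'\le d_i$ for some single $i$, and because $b\ll c'$ the set $\Delta_L(c')$ is one of the sets defining $Q$, so $Q\subseteq\Delta_L(c')\subseteq\Delta_L(d_i)$ — a one-element subcover. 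Proving the implication $Q\subseteq\Delta_L(e)\Rightarrow b\ll e$ is where soberness must re-enter: arguing contrapositively, if $b\not\ll e$ then a directed join witnessing this, together with soberness, should produce a prime $p\ge e$ lying in every $\Delta_L(c)$ with $b\ll c$, that is, a point of $Q\setminus\Delta_L(e)$. Making this point-construction precise — in effect identifying $\{c:b\ll c\}$ with the neighbourhood filter of the compact set $Q$, in the spirit of the Hofmann--Mislove correspondence — is the delicate part; the remainder of the proof is the bookkeeping indicated above.
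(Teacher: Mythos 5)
Your reduction is sound as far as it goes: soberness is exactly Fact \ref{sober}, and the isomorphism statement follows from ``continuous implies spatial'' plus Fact \ref{iso-spatial}; note also that the paper itself gives no proof of this Fact (it is quoted from \cite{G2} and \cite{PP}), so everything rests on your local compactness argument. That argument has a genuine gap, and it sits precisely at the step you flagged as delicate: the set $\{c\in L: b\ll c\}$ is Scott-open (by interpolation) but in a general continuous frame it is \emph{not} a filter, since the way-below relation is not multiplicative ($b\ll c_1$ and $b\ll c_2$ need not give $b\ll c_1\wedge c_2$; frames where this holds are the \emph{stably} continuous ones). Consequently your set $Q=\bigcap\{\Delta_L(c):b\ll c\}$ need not be compact, and the crux implication ``$Q\subseteq\Delta_L(e)\Rightarrow b\ll e$'' is false, so no amount of care in the point-construction can establish it. Concretely, let $X=\mathbb{N}\cup\{\infty_1,\infty_2\}$, where $U$ is open iff $U$ is cofinite whenever it contains $\infty_1$ or $\infty_2$. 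This space is $T_1$, sober and locally compact, so $L=\tau_X$ is a continuous frame by Fact \ref{5.1.1}, and $Spec(L)$ may be identified with $X$. Take $b=\mathbb{N}$ and $a=U_1:=\mathbb{N}\cup\{\infty_1\}$; then $b\ll a$, and one checks that $\{c: b\ll c\}=\{U_1,U_2,X\}$ with $U_2:=\mathbb{N}\cup\{\infty_2\}$ (the cover of $\mathbb{N}$ by singletons shows $b\not\ll b$). Hence $Q=U_1\cap U_2=\mathbb{N}$, an infinite discrete open set: saturated but not compact. Moreover $Q\subseteq\Delta_L(\mathbb{N})$ while $\mathbb{N}\not\ll\mathbb{N}$, refuting the crux. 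What your spatiality-based reasoning does prove is only the weaker statement $Q\subseteq\Delta_L(e)\Rightarrow b\le e$ (if $b\not\le e$, pick a prime $p\ge e$ with $b\not\le p$; then $p\in Q\setminus\Delta_L(e)$), and that is too weak to extract a finite subcover.

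The standard repair, which is essentially the proof in the sources the paper cites, keeps your outer scaffolding but replaces $\{c:b\ll c\}$ by a genuine Scott-open filter obtained from \emph{iterated} interpolation: choose $b\ll\cdots\ll c_2\ll c_1\ll a$ and set $F=\bigcup_n {\uparrow}c_n$, which is a Scott-open filter with $a\in F\subseteq\{x: b\ll x\}$; then put $K=\bigcap_{x\in F}\Delta_L(x)$, so that $\Delta_L(b)\subseteq K\subseteq\Delta_L(a)$ as in your argument. Compactness of $K$ now follows from the Hofmann--Mislove-type lemma: if $e\notin F$, Zorn's lemma (chains stay outside $F$ by Scott-openness) yields a maximal element $p$ of $\{y\ge e: y\notin F\}$, and $p$ is prime because $F$ is closed under binary meets and $(p\vee u)\wedge(p\vee v)=p\vee(u\wedge v)$ in a frame; such a $p$ lies in $K\setminus\Delta_L(e)$. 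Contrapositively $K\subseteq\Delta_L(e)$ forces $e\in F$, and then Scott-openness of $F$ turns any directed open cover of $K$ into a single covering member. Note where your version breaks in this scheme: without closure of the indexing family under meets, the maximal element produced by Zorn cannot be shown prime, and the counterexample above shows this is not a removable technicality.
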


\begin{fact}[{{\cite[Thm. V-5.6]{G2}, \cite[Ch. VII, 5.1.1]{PP}}}] \label{5.1.1}
If $(X,\tau_X)$ is a sober locally compact topological space, then $\tau_X$ is a continuous frame.
\end{fact}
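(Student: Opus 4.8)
The plan is to verify the continuity condition directly, showing that every open set is the join of the opens way-below it, and to extract these way-below pairs from local compactness. First I would recall that for any space the topology $\tau_X$, ordered by inclusion, is automatically a frame: arbitrary joins are unions, finite meets are intersections, and the frame distributive law holds because intersection distributes over arbitrary unions. So the frame structure costs nothing, and the only real content is to establish $U = \bigvee \{ V \in \tau_X : V \ll U \}$ for every $U \in \tau_X$.

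The crucial step, which I expect to be the heart of the argument, is a lemma translating compactness into the way-below relation: if $V, U \in \tau_X$ and there is a compact set $K$ with $V \subseteq K \subseteq U$, then $V \ll U$. To prove it, let $\mathcal{D} \subseteq \tau_X$ be a directed family with $U \subseteq \bigcup \mathcal{D}$. Then $\mathcal{D}$ is an open cover of the compact set $K$, so finitely many members $D_1, \dots, D_n$ already cover $K$; directedness of $\mathcal{D}$ then yields a single $D \in \mathcal{D}$ with $D_1 \cup \dots \cup D_n \subseteq D$, whence $V \subseteq K \subseteq D$. This is exactly $V \ll U$. The subtle point is the passage from a finite subcover to one member of $\mathcal{D}$, which is precisely where directedness (as opposed to an arbitrary cover) is essential, and it is the reason compactness of an intermediate set $K$, rather than of $U$ itself, is the correct hypothesis.

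With the lemma in hand the rest is routine. Fix $U \in \tau_X$ and a point $x \in U$. By local compactness there are $V_x \in \tau_X$ and a compact $K_x$ with $x \in V_x \subseteq K_x \subseteq U$, so the lemma gives $V_x \ll U$. Since every point of $U$ lies in some such $V_x$, we get $U = \bigcup_{x \in U} V_x \subseteq \bigvee \{ V : V \ll U \} \subseteq U$ (the last inclusion holds because $V \ll U$ forces $V \subseteq U$), and equality follows; hence $\tau_X$ is a continuous frame. I would remark that soberness is not actually consumed by this implication — local compactness alone supplies the way-below neighbourhoods — and it enters the development only to make this direction an exact inverse of Fact \ref{6.4.2} in the resulting duality. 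The one place where care is genuinely required is fixing, at the outset, the definition of local compactness as the existence of a compact set $K_x$ squeezed between an open neighbourhood $V_x$ of $x$ and $U$, since that intermediate compactum is exactly what the lemma feeds on.
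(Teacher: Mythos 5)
Your proof is correct, and since the paper states this Fact without proof (citing \cite[Thm. V-5.6]{G2} and \cite[Ch. VII, 5.1.1]{PP}), your argument is essentially the one found in those cited sources: the interpolation lemma that an open $V$ contained in a compact $K$ contained in $U$ satisfies $V \ll U$ (with directedness upgrading the finite subcover to a single member of the family), followed by local compactness supplying such pairs at every point of $U$. Your side remark that sobriety is not consumed here, but only enters to make this Fact pair with Fact \ref{6.4.2} in the duality, is also accurate.
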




Finally, we recall the classical Hofmann-Lawson duality theorem.

\begin{thm}[{{\cite[Thm. 2-7.9]{PS}}}, {{\cite[Thm. 9.6]{HL}}}] \label{HL}
The category $\mathbf{ContFrm}$ of  continuous frames with frame homomorphisms and the category $\mathbf{LKSob}$ of locally compact sober spaces and continuous maps are dually equivalent. 
\end{thm}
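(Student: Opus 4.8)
The plan is to exhibit two contravariant functors between $\mathbf{ContFrm}$ and $\mathbf{LKSob}$ and to promote the object-level correspondences already recorded in Facts \ref{6.4.2} and \ref{5.1.1} to a dual equivalence by supplying the morphism assignments and the required natural isomorphisms. The topological substance is entirely contained in those two facts, so the argument will be one of assembly rather than of genuinely new analysis.

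First I would define the spectrum functor $Spec : \mathbf{ContFrm} \to \mathbf{LKSob}$. On objects it sends a continuous frame $L$ to $(Spec(L),\Delta_L(L))$, which is a locally compact sober space by Fact \ref{6.4.2}. On morphisms, given a frame homomorphism $h : L \to M$, I would use its right Galois adjoint $h_* : M \to L$ (the passage to $\mathbf{Loc}$): since right adjoints preserve all meets, $h_*$ carries primes to primes and the top to the top, and from the adjunction $h_*(q)=1 \Leftrightarrow h(1)\leq q \Leftrightarrow 1\leq q$ one sees $h_*(q)\neq 1$ whenever $q\neq 1$; thus $Spec(h):=h_*|_{Spec(M)} : Spec(M)\to Spec(L)$ is well defined. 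Continuity, and simultaneously the key naturality identity, come from the single computation $(Spec(h))^{-1}(\Delta_L(a)) = \{\, q\in Spec(M) : a\not\leq h_*(q)\,\} = \{\, q : h(a)\not\leq q\,\} = \Delta_M(h(a))$, valid for every $a\in L$.

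Dually I would define the open-set functor $\mathcal{O} : \mathbf{LKSob}\to\mathbf{ContFrm}$ by $X\mapsto \tau_X$, which is a continuous frame by Fact \ref{5.1.1}, sending a continuous map $f:X\to Y$ to the preimage frame homomorphism $f^{-1}:\tau_Y\to\tau_X$. Then I would produce the two natural isomorphisms. On the frame side, the family $(\Delta_L)_L$ is a natural transformation $L\to\tau_{Spec(L)}=\mathcal{O}(Spec(L))$, its naturality square being exactly the identity $(Spec(h))^{-1}\circ\Delta_L=\Delta_M\circ h$ established above, and each component is an isomorphism by Fact \ref{6.4.2}. On the space side I would take $\varepsilon_X : X\to Spec(\tau_X)$, $x\mapsto X\setminus\overline{\{x\}}$; this lands in the non-unit primes because complementation turns meet-primes into nonempty irreducible closed sets, and it is a bijection precisely by sobriety (Fact \ref{sober} guarantees the target is sober, and sobriety of $X$ supplies injectivity and surjectivity). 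The computation $\varepsilon_X^{-1}(\Delta_{\tau_X}(U)) = \{\, x : U\cap\overline{\{x\}}\neq\emptyset\,\} = U$ then shows $\varepsilon_X$ is a homeomorphism.

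Finally I would verify that $\varepsilon$ is natural in $X$ and conclude that $\Delta$ and $\varepsilon$ furnish the natural isomorphisms of a dual equivalence $\mathbf{ContFrm}^{\mathrm{op}}\simeq\mathbf{LKSob}$. Since Facts \ref{6.4.2} and \ref{5.1.1} already encode the decisive equivalence (continuity of a frame versus local compactness of its spectrum, together with spatiality), I expect no deep obstacle of an analytic kind. The real work is the categorical bookkeeping: checking that $h\mapsto Spec(h)$ and $f\mapsto f^{-1}$ are contravariantly functorial and that the two naturality squares commute. Here the adjunction identity for $h_*$ does most of the labor, while the sobriety bijection $\varepsilon_X$ demands the most care, since one must confirm that it matches the hull-kernel topology on $Spec(\tau_X)$ to the original topology on $X$ rather than merely establishing a set-theoretic bijection.
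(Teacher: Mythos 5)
Your proposal is correct, but there is nothing in the paper to compare it against directly: Theorem \ref{HL} is not proved in the paper at all --- it is quoted as a known classical result, with the proof delegated to the cited sources (Pultr--Sichler and Hofmann--Lawson). What you have written is essentially the standard textbook argument, and it checks out: the adjunction identity $h(a)\leq q \iff a\leq h_*(q)$ does give both that $h_*$ preserves non-unit primes and the key formula $(Spec(h))^{-1}(\Delta_L(a))=\Delta_M(h(a))$, which simultaneously yields continuity, functoriality bookkeeping, and naturality of $\Delta$; the map $x\mapsto X\setminus\overline{\{x\}}$ is a homeomorphism exactly because sobriety (with the $T_0$ convention the paper adopts) gives bijectivity and your computation $\varepsilon_X^{-1}(\Delta_{\tau_X}(U))=U$, combined with surjectivity of $\Delta_{\tau_X}$ onto the hull-kernel topology, upgrades the bijection to a homeomorphism. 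The one step you leave unverified --- naturality of $\varepsilon$ --- is routine and does go through: $(f^{-1})_*(X\setminus\overline{\{x\}})=\bigcup\{V\in\tau_Y: f(x)\notin V\}=Y\setminus\overline{\{f(x)\}}$. It is worth noting that your construction is precisely the blueprint the paper follows when it proves its own generalization: your $Spec(h)=h_*|_{Spec(M)}$ is the paper's $\Sigma(h_*)$, your $\varepsilon_X(x)=X\setminus\overline{\{x\}}$ is the paper's $\lambda_X(x)=\ext_X\{x\}$, and your frame-side isomorphism $\Delta_L$ corresponds to the paper's $\sigma_L=(\Delta_L)_*$; the paper then adds the bookkeeping of the distinguished sublattices $\mc{L}_X\subseteq\mc{L}_X^{wo}$ and $L_s\subseteq L$ (boundedness, domination, compatibility) to obtain the version for locally small spaces, of which the classical statement you proved is the special case $\mc{L}_X=\tau_X$, $L_s=L$.
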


\section{Categories of consideration.}

Now we present the basic facts on the theory of locally small spaces.
\begin{defi}[{{\cite[Definition 2.1]{P}}}]\label{lss}
A \textit{locally small space} 
  is a pair $(X,\mc{L}_X)$, where $X$ is any set and  $\mc{L}_X\subseteq  \mc{P}(X)$ satisfies the following conditions:
\begin{enumerate}
\item[(LS1)] \quad $\emptyset \in \mc{L}_X$,
\item[(LS2)] \quad if $L,M \in \mc{L}_X$, then $L\cap M, L\cup M \in \mc{L}_X$,
\item[(LS3)] \quad $\forall x \in X \: \exists L_x \in \mc{L}_X  \:\:  x\in L_x$ (i. e., $\bigcup \mc{L}_X = X$).
\end{enumerate}
The family $\mc{L}_X$ will be called a \textit{smopology} on $X$ and
elements of $\mc{L}_X$ will be called \textit{small open} subsets  (or  \textit{smops}) in $X$.
\end{defi}

\begin{rem}
Each topological space $(X,\tau_X)$ may be expanded in many ways to  a locally small space by choosing a suitable  basis $\mc{L}_X$  of the topology $\tau_X$ such that
$\mc{L}_X$  is a sublattice in $\tau_X$ containg the empty set.
\end{rem}

\begin{defi}[{{\cite[Def. 2.9]{P}}}]
For a locally small space $(X,\mc{L}_X)$ we define the family 
$$  \mc{L}_X^{wo} =\mbox{the unions of subfamilies of }\mc{L}_X $$
of \textit{weakly open} sets.
Then $\mc{L}_X^{wo}$ is the smallest topology containing $\mc{L}_X$.
\end{defi}

\begin{exam} The nine families of subsets of $\mb{R}$  from Example   2.14  in \cite{P}
(compare Definition  1.2 in \cite{PW})  are smopologies and share the same 
family of weakly open sets (the natural topology on $\mb{R}$).
 Analogically, Definition 4.3 in \cite{PW2} shows many generalized topological spaces induced by  smopologies on $\mb{R}$ with the same family of weakly open sets (the Sorgenfrey topology). 
\end{exam}

\begin{defi}[{{\cite[Def. 31]{P-stone}}}]
A locally small space  $(X,\mc{L}_X)$ is called $T_0$ (or \textit{Kolmogorov}) if the family $\mc{L}_X$ \textit{separates points}, which means
$$\forall x,y\in X \quad (x\neq y)\implies \exists V\in  \mc{L}_X  \mid V \cap \{x,y\} \mid =1.$$
\end{defi}

\begin{defi}
Assume $(X, \mc{L}_X)$ and $(Y,\mc{L}_Y)$  are locally small spaces.
Then a mapping $f:X \to Y$ is: 
\begin{enumerate}
\item[$(a)$]  \textit{bounded} (\cite[Definition 2.40]{P}) if $\mc{L}_X$ is a refinement of $f^{-1}(\mc{L}_Y)$,
\item[$(b)$]  \textit{continuous} (\cite[Definition 2.40]{P})  if 
$f^{-1}(\mc{L}_Y) \cap_1 \mc{L}_X \subseteq \mc{L}_X $, 
\item[$(c)$] \textit{weakly continuous} if 
$f^{-1}(\mc{L}_Y^{wo})  \subseteq \mc{L}_X^{wo}$. 
\end{enumerate} 
The category of  locally small spaces and their bounded continuous mappings  is denoted by \textbf{LSS} (\cite[Remark 2.46]{P}). 
The full subcategory of $T_0$ locally small spaces is denoted by $\mathbf{LSS}_0$
(\cite[Def. 33]{P-stone}).
\end{defi}

\begin{defi} We have the following full subcategories of $\mathbf{LSS}_0$:
\begin{enumerate}
\item  the category $\mathbf{SobLSS}$ generated by the topologically sober
 objects $(X,\mc{L}_X)$, i.e., such object that the topological space $(X, \mc{L}_X^{wo})$ is sober (compare \cite[Def. 3.1]{PW}).
 \item the category $\mathbf{LKSobLSS}$ generated by the topologically locally compact sober objects. 
\end{enumerate}
\end{defi}

\begin{exam}
For $X=\mathbb{R}^2$,  we take
$$ \mc{L}_{lin}=\mbox{ the smallest smopology containing all straight lines,} $$
$$ \mc{L}_{alg}= \mbox{ the smallest smopology containing all proper algebraic subsets.}$$
Then $( \mathbb{R}^2,\mc{L}_{lin}  )$ and $( \mathbb{R}^2  ,\mc{L}_{alg}  )$
are two different topologically locally compact sober
locally small spaces with the same topology of weakly open sets (the discrete topology).
\end{exam}

Now we introduce some categories constructed from \textbf{Frm}.

\begin{defi}
The category $\mathbf{FrmS}$ consists of pairs $(L,L_s)$ with $L$ a frame and $L_s$
  a sublattice with zero \textit{sup-generating} $L$ (this means: every member of $L$ is the supremum of a subset of $L_s$) as objects  and  dominating compatible frame homomorphisms $h:(L,L_s)\to (M,M_s)$ as morphisms. Here a frame homomorphism $h:L\to M$ is called 
 \begin{enumerate}
 \item 
  \textit{dominating} if
   $\quad \forall m\in M_s \quad \exists l\in L_s\quad h(l) \wedge m = m$\\
 (then we shall  also say that $h(L_s)$  \textit{dominates} $M_s$). 
\item  \textit{compatible} if
 $\quad \forall m\in M_s \quad \forall l\in L_s \quad h(l)\wedge m \in M_s$\\
 (then we shall  also say that $h(L_s)$ \textit{is compatible with} $M_s$).
\end{enumerate}
\end{defi}

\begin{rem} \label{on}
If $h:L \to M$ is a frame homomorphism satisfying $h(L_s)=M_s$, then 
$h: (L,L_s)\to (M,M_s)$ is dominating compatible. 
\end{rem}

\begin{defi} 
The category $\mathbf{LocS}$ consists of pairs $(L,L_s)$ with $L$ a frame and $L_s$ 
 a sublattice with zero {sup-generating} $L$  as objects and
\textit{special localic maps} (i.e., right Galois adjoints $h_*:(M,M_s)\to (L,L_s)$ of dominating compatible frame homomorphisms $h:(L,L_s)\to (M,M_s)$) as morphisms.
\end{defi}

\begin{rem} \label{iso}
The categories $\mathbf{FrmS}^{op}$ and $\mathbf{LocS}$ are isomorphic.
\end{rem}

\begin{defi} We introduce the following categories:
\begin{enumerate}
\item
the  full subcategory \textbf{SpFrmS} in $\mathbf{FrmS}$
generated by objects $(L,L_s)$ where $L$ is a spatial frame, 
\item
the   full subcategory \textbf{SpLocS} in $\mathbf{LocS}$
generated  by objects $(L,L_s)$ where $L$ is a spatial frame,
\item
the full subcategory $\mathbf{ContFrmS}$ in $\mathbf{FrmS}$
generated by objects $(L,L_s)$ where $L$ is a continuous frame, 

\item the   full subcategory $\mathbf{ContLocS}$ in $\mathbf{LocS}$
generated  by objects $(L,L_s)$ where $L$ is a continuous frame.
\end{enumerate}
\end{defi}

\section{The spectral adjunction.}
\begin{thm}[the spectral adjunction] \label{adjunction}
The categories  $\mathbf{LSS}$  and $\mathbf{LocS}$ are adjoint. 
\end{thm}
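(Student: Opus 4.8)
The plan is to exhibit two functors and establish a natural bijection of hom-sets. Define the \emph{spectral functor} $\mathcal{P}\colon\mathbf{LocS}\to\mathbf{LSS}$ on objects by
$$\mathcal{P}(L,L_s)=\bigl(Spec(L),\,\Delta_L(L_s)\bigr),$$
and the \emph{localic functor} $\Phi\colon\mathbf{LSS}\to\mathbf{LocS}$ on objects by
$$\Phi(X,\mc{L}_X)=\bigl(\mc{L}_X^{wo},\,\mc{L}_X\bigr).$$
First I would check these land in the correct categories. For $\Phi$: $\mc{L}_X^{wo}$ is a topology, hence a frame, and $\mc{L}_X$ is a sublattice with zero (by (LS1)--(LS2)) that sup-generates $\mc{L}_X^{wo}$, since every weakly open set is by definition a union of smops. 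For $\mathcal{P}$: by Fact \ref{topol} the family $\Delta_L(L_s)$ is closed under finite meets and joins and contains $\emptyset=\Delta_L(0)$, so it is a smopology once I verify (LS3), i.e.\ $\bigcup\Delta_L(L_s)=Spec(L)$; this holds because $L_s$ sup-generates $L$, so $\sup L_s=1$, whence no prime $p\neq 1$ can dominate all of $L_s$, giving some $a\in L_s$ with $a\not\le p$, i.e.\ $p\in\Delta_L(a)$.

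On morphisms the construction rests on a precise dictionary. For a bounded continuous $f\colon(X,\mc{L}_X)\to(Y,\mc{L}_Y)$ the preimage map $f^{-1}\colon(\mc{L}_Y^{wo},\mc{L}_Y)\to(\mc{L}_X^{wo},\mc{L}_X)$ is a frame homomorphism, and continuity (which forces $f^{-1}(V)\cap M\in\mc{L}_X$ for $V\in\mc{L}_Y$, $M\in\mc{L}_X$) also shows that $f^{-1}$ sends weakly open sets to weakly open sets. I would then verify that \emph{boundedness of $f$ is exactly the dominating condition} on $f^{-1}$ (each $M\in\mc{L}_X$ lies in some $f^{-1}(V)$) and \emph{continuity of $f$ is exactly the compatibility condition} ($f^{-1}(V)\cap M\in\mc{L}_X$); taking the right Galois adjoint $(f^{-1})_*$ yields the special localic map $\Phi(f)$. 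Dually, a special localic map $h_*\colon(M,M_s)\to(L,L_s)$ restricts to $q\mapsto h_*(q)$ on primes, and the adjunction identity $a\le h_*(q)\iff h(a)\le q$ gives $(h_*|_{Spec})^{-1}(\Delta_L(a))=\Delta_M(h(a))$; then dominating yields boundedness and compatibility yields continuity of $\mathcal{P}(h_*)$. Functoriality of both assignments is then routine.

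To produce the adjunction I would show $\Phi\dashv\mathcal{P}$ by constructing the natural bijection
$$\mathbf{LocS}\bigl(\Phi(X),(L,L_s)\bigr)\;\cong\;\mathbf{LSS}\bigl(X,\mathcal{P}(L,L_s)\bigr).$$
Unwinding $\mathbf{LocS}$-morphisms via Remark \ref{iso}, the left side is the set of dominating compatible frame homomorphisms $h\colon(L,L_s)\to(\mc{L}_X^{wo},\mc{L}_X)$. Given such $h$, set $f(x)=h_*(q_x)$, where $q_x=\bigcup\{U\in\mc{L}_X^{wo}:x\notin U\}$ is the prime of $\mc{L}_X^{wo}$ attached to $x$; conversely, given a bounded continuous $f\colon X\to Spec(L)$, set $h=f^{-1}\circ\Delta_L$, whose domination and compatibility encode the boundedness and continuity of $f$. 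I would check these are mutually inverse and natural. The unit $\eta_X\colon X\to\mathcal{P}\Phi(X)$ is the soberification map $x\mapsto q_x$, while the counit is $(\Delta_L)_*$, the special localic map arising from the surjective frame homomorphism $\Delta_L\colon(L,L_s)\to(\Delta_L(L),\Delta_L(L_s))$ of Fact \ref{topol}, which is dominating compatible by Remark \ref{on}; here $\Phi\mathcal{P}(L,L_s)=(\Delta_L(L),\Delta_L(L_s))$ because $\Delta_L(L_s)$ sup-generates $\Delta_L(L)$. Verifying the triangle identities closes the argument.

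The main obstacle I expect is not the underlying point/open adjunction, which is classical for $\mathbf{Top}$ and $\mathbf{Loc}$, but the bookkeeping of the auxiliary sublattices $L_s$ and smopologies $\mc{L}_X$ throughout. Concretely, the delicate points are (i) confirming that the four conditions (bounded, continuous) match the pair (dominating, compatible) on the nose in \emph{both} directions, so that the functors are well defined on morphisms, and (ii) checking that $q_x$ is a well-defined prime and that $h_*$ carries primes to primes, so that special localic maps restrict correctly to spectra and the hom-set bijection respects the extra small structure rather than merely the ambient frames.
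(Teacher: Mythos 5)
Your proposal is correct and follows essentially the same route as the paper: the same pair of functors ($\Phi=\Omega$, $\mathcal{P}=\Sigma$), the same dictionary (bounded $\leftrightarrow$ dominating, continuous $\leftrightarrow$ compatible), the same unit $x\mapsto \ext_X\{x\}$ and the same counit $(\Delta_L)_*$. The only differences are presentational: you package the adjunction as a natural hom-set bijection where the paper verifies the two triangle identities directly, and your explicit check of (LS3) for $\Delta_L(L_s)$ (via sup-generation of $L$ by $L_s$) is a detail the paper leaves implicit.
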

\begin{proof}
\textbf{Step 1:} Defining functor $\Omega: \mathbf{LSS} \to \mathbf{LocS}$.

Functor $\Omega: \mathbf{LSS} \to \mathbf{LocS}$ is defined by
$$ \Omega (X,\mc{L}_X)=(\mc{L}^{wo}_X, \mc{L}_X), \quad 
 \Omega(f) =(\mc{L}^{wo} f)_{*} , $$
where $\mc{L}^{wo} f:\mc{L}^{wo}_Y \to \mc{L}^{wo}_X$ is given by $(\mc{L}^{wo} f)(W)=f^{-1}(W)$ for  a   bounded continuous $f:(X, \mc{L}_X)\to (Y, \mc{L}_Y)$ .

For any  locally small space $(X,\mc{L}_X)$, the pair 
$(\mc{L}^{wo}_X, \mc{L}_X)$ consists of a frame and a  sublattice  with zero that 
sup-generates the frame.

For a bounded continuous map $f:(X,\mc{L}_X) \to (Y,\mc{L}_Y)$
the  frame homomorphism $\mc{L}^{wo} f:\mc{L}^{wo}_Y \to \mc{L}^{wo}_X$ is always:
\begin{enumerate}
\item  
dominating (because $f$ is bounded): 
$$\forall W\in \mc{L}_X \ \exists V\in \mc{L}_Y\ (\mc{L}^{wo} f)(V) \cap W = W,$$
\item
compatible ({because $f$ is continuous}):
 $$\forall W\in \mc{L}_X\ \forall V\in \mc{L}_Y\ (\mc{L}^{wo} f)(V)\cap W \in \mc{L}_X.$$ 
\end{enumerate}
The mapping $(\mc{L}^{wo} f)_{*}:\mc{L}_X^{wo}\to \mc{L}_Y^{wo}$,  defined by the condition
$$(\mc{L}^{wo}f)_{*}(W)=\bigcup \{V\in \mc{L}^{wo}_Y : f^{-1}(V)\subseteq W\}=\bigcup (\mc{L}^{wo}f)^{-1}(\downarrow W),$$
is a special  localic map.
Clearly, $\Omega$ preserves identities and compositions.

\noindent  \textbf{Step 2:} Defining functor 
$\Sigma:\mathbf{LocS} \to \mathbf{LSS}$.

Functor $\Sigma:\mathbf{LocS} \to \mathbf{LSS}$ is defined by
$$\Sigma  (L,L_s)=(Spec(L),  \Delta_L (L_s) ),$$
$$ \Sigma (h_{*}) =h_{*}|_{Spec(M)}:Spec(M)\to Spec(L).$$
(Notice that $\Sigma (h_{*})$ is  always well defined by \cite[Prop.4.5]{G2}).

The pair $(Spec(L), \{ \Delta_L (a) \}_{a\in L_s})$ is always a topologically sober locally small space by Facts \ref{topol} and \ref{sober}.

For $ h:(L,L_s)\to (M,M_s)$  a dominating compatible frame homomorphism,
$\Sigma(h_*)=h_{*}|_{Spec(M)}$ is always a bounded continuous mapping between locally small spaces from $(Spec(M), \Delta_M(M_s))$ to $(Spec(L),\Delta_L(L_s))$:
\begin{enumerate}
\item  Take any $a\in M_s$. Since $h$ is dominating, for some $b\in L_s$ we have
$h_*|_{Spec(L)} (\Delta_M(a)) \subseteq h_*|_{Spec(L)}(\Delta_M(h(b))) \subseteq \Delta_L(b)$. This is why
$ h_*|_{Spec(L)}(\Delta_M(M_s))$  refines $\Delta_L(L_s)$.

\item For any $d\in L_s$, $f\in M_s$, we have
$$(h_*|_{Spec(L)})^{-1}(\Delta_L(d)) \cap \Delta_M(f) = \Delta_M (h(d) \wedge f) \in \Delta_M(M_s).$$
This is why $ (h_*|_{Spec(L)})^{-1}(\Delta_L(L_s))\cap_1  \Delta_M(M_s)\subseteq \Delta_M(M_s).$
\end{enumerate}
Clearly, $\Sigma$ preserves identities and compositions.

\noindent \textbf{Step 3:} There exists a natural transformation $\sigma$ from $\Omega \Sigma$ to $Id_{\mathbf{LocS}}$.

We define the mapping $\sigma_L: (\Delta_L(L),\Delta_L(L_s))\to (L,L_s)$  by the formula
$$\sigma_L=(\Delta_L)_*: \Delta_L(L)=\tau( \Delta_L (L_s) ) \ni A \to \bigvee \Delta_L^{-1} (\downarrow\!\! A) \in L.$$
This $\sigma_L$ is an (injective)  morphism in $\mathbf{LocS}$ since $\Delta_L$ is a (surjective) dominating compatible frame homomorphism:
\begin{enumerate}
\item  $\Delta_L (L_s)$ obviously dominates $\Delta_L(L_s)$. 
\item  Take any $D\in \Delta_L (L_s)$ and $f\in L_s$. Choose $d\in L_s$ such that $\Delta_L(d)=D$. Then $D\cap \Delta_L(f)=\Delta_L (d\wedge f) \in \Delta_L (L_s)$, so $\Delta_L$ is compatible.
\end{enumerate}
For a special localic map $h_*:(M,M_s)\to (L,L_s)$ and $a\in M$,
we have 
$$\sigma_L \circ \Omega\Sigma(h_*) (\Delta_M(a))=\sigma_L \circ (\mc{L}^{wo}h_*|_{Spec(M)})_{*} (\Delta_M(a))=$$
$$\sigma_L(\bigvee \{Z\in \Delta_L (L): (h_*|_{Spec(M)})^{-1}(Z)\subseteq \Delta_M(a)\})=
h_*(a)= h_*\circ \sigma_M  (\Delta_M(a)),$$
so $\sigma$ is a natural transformation.

\textbf{Step 4:} There exists a natural transformation $\lambda $ from $Id_{\mathbf{LSS}}$ to $ \Sigma\Omega$.

W define $\lambda_X: (X,\mc{L}_X)\to (Spec(\mc{L}^{wo}_X),\Delta(\mc{L}_X))$,
where $\Delta=\Delta_{\mc{L}^{wo}_X}$, by 
$$\lambda_X:X\ni x \mapsto \ext_X \{x\} \in Spec(\mc{L}^{wo}_X),$$
which is a bounded continuous map:
\begin{enumerate}
\item  Take any $W\in \mc{L}_X$. Then $\lambda_X(W)=\{ \ext\{x\}:x\in W\}$, which is contained in $\Delta(W)=\{ V\in Spec(\mc{L}^{wo}_X): W \not\subseteq V \}$.  
This is why
$ \lambda_X(\mc{L}_X) \mbox{ refines } \Delta(\mc{L}_X)$. 

\item  Take any $W\in \mc{L}_X$. Since $x\in W$ iff $\ext\{x\}\in \Delta(W)$, 
we have $W=\lambda_X^{-1}(\Delta(W))$. This is why
$\lambda_X^{-1} (\Delta(\mc{L}_X))\cap_1  \mc{L}_X\subseteq \mc{L}_X$.
\end{enumerate}
For a bounded continuous map $f:(X,\mc{L}_X) \to (Y,\mc{L}_Y)$, we have 
$$(\Sigma\Omega(f)\circ \lambda_X)(x)=(\mc{L}^{wo}f)_{*} (\ext_X\{ x\} )=
\bigcup \{ Z\in Spec(\mc{L}^{wo}_X): x\notin f^{-1}(Z)\}$$
$$=\ext_Y\{f(x)\}=(\lambda_Y \circ f)(x).$$
 This means $\lambda$ is a natural transformation. 

\textbf{Step 5:} Functor $\Omega$ is a left adjoint of functor $\Sigma$.

We are to prove that
$$\sigma_L|_{Spec(\Delta(L))}\circ \lambda_{Spec(L)}=id_{Spec(L)}, \qquad
\sigma_{\mc{L}^{wo}_X}\circ (\mc{L}^{wo}\lambda_X)_*=id_{\mc{L}^{wo}_X}.$$
For $p\in Spec(L)$, we have
$$  \sigma_L|_{Spec(\Delta(L))}\circ \lambda_{Spec(L)}(p)=\sigma_L|_{Spec(\Delta(L))}(\ext_{Spec(L)}\{ p\})=id_{Spec(L)}(p).$$

For $W\in \mc{L}^{wo}_X$, we have
$$\sigma_{\mc{L}^{wo}_X}\circ (\mc{L}^{wo}\lambda_X)_*(W)=\sigma_{\mc{L}^{wo}_X}
(\Delta_{\mc{L}^{wo}_X}(W))=id_{\mc{L}^{wo}_X}(W).$$
\end{proof}



\section{A Stone-type duality.}

\begin{thm} \label{stone}
The categories $\mathbf{SobLSS}$, $\mathbf{SpLocS}$ and $\mathbf{SpFrmS}^{op}$ are  equivalent.
\end{thm}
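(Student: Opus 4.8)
The plan is to derive the theorem from the spectral adjunction (Theorem \ref{adjunction}) together with the category isomorphism of Remark \ref{iso}. First observe that the isomorphism $\mathbf{FrmS}^{op}\cong\mathbf{LocS}$ of Remark \ref{iso} is the identity on objects and only passes morphisms to their Galois adjoints; since spatiality of $L$ is a property of the object $(L,L_s)$ shared by both sides, this isomorphism restricts to $\mathbf{SpFrmS}^{op}\cong\mathbf{SpLocS}$. It therefore suffices to show that the functors $\Omega$ and $\Sigma$ restrict to an adjoint equivalence $\mathbf{SobLSS}\simeq\mathbf{SpLocS}$.

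First I would check that the functors land in the correct subcategories. For $(X,\mc{L}_X)\in\mathbf{SobLSS}$ the frame $\mc{L}_X^{wo}$ is the lattice of open sets of a genuine topological space, hence spatial, so $\Omega(X,\mc{L}_X)=(\mc{L}_X^{wo},\mc{L}_X)\in\mathbf{SpLocS}$. Conversely, for $(L,L_s)\in\mathbf{SpLocS}$ the weak topology of $\Sigma(L,L_s)=(Spec(L),\Delta_L(L_s))$ is the hull-kernel topology $\Delta_L(L)$, which is sober by Fact \ref{sober}; since sober spaces are $T_0$, we obtain $\Sigma(L,L_s)\in\mathbf{SobLSS}$.

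It remains to show that the counit $\sigma$ and the unit $\lambda$ of Theorem \ref{adjunction} are isomorphisms on these subcategories. For the counit, spatiality of $L$ makes $\Delta_L$ a frame isomorphism by Fact \ref{iso-spatial}, so its right Galois adjoint $\sigma_L=(\Delta_L)_*$ is invertible; as $\Delta_L$ carries $L_s$ bijectively onto $\Delta_L(L_s)$, it is an isomorphism of objects of $\mathbf{SpLocS}$. For the unit, $\lambda_X\colon x\mapsto\ext_X\{x\}$ is the canonical map of $(X,\mc{L}_X^{wo})$ into the spectrum of its open-set frame; it is injective because $\mc{L}_X$ separates points and surjective because the space is sober, and the identity $\lambda_X^{-1}(\Delta(W))=W$ established in the proof of Theorem \ref{adjunction}, combined with bijectivity, yields $\lambda_X(\mc{L}_X)=\Delta(\mc{L}_X)$, so $\lambda_X$ is an isomorphism in $\mathbf{LSS}$. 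With unit and counit now natural isomorphisms, the restricted adjunction upgrades to an adjoint equivalence $\mathbf{SobLSS}\simeq\mathbf{SpLocS}$ (fullness of both subcategories ensuring that no morphisms are lost), and composing with $\mathbf{SpLocS}\cong\mathbf{SpFrmS}^{op}$ gives the asserted three-way equivalence.

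The delicate point, I expect, is the verification that $\lambda_X$ is an isomorphism rather than a mere bijection of underlying sets. Surjectivity rests on every non-unit prime of $\mc{L}_X^{wo}$ being of the form $\ext_X\{x\}$ for a unique $x$, equivalently on every irreducible closed set of the weak topology being the closure of a unique point; this is precisely topological soberness together with $T_0$, and it is here that the defining hypothesis of $\mathbf{SobLSS}$ is consumed. The counit side, by contrast, is essentially immediate once Fact \ref{iso-spatial} is invoked.
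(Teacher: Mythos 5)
Your proposal is correct and takes essentially the same route as the paper: restrict the spectral adjunction $\Omega\dashv\Sigma$ to the subcategories, show the counit $\sigma_L=(\Delta_L)_*$ is an isomorphism via Fact \ref{iso-spatial} together with $\Delta_L$ mapping $L_s$ bijectively onto $\Delta_L(L_s)$, show the unit $\lambda_X$ is an isomorphism using soberness and $\lambda_X(\mc{L}_X)=\Delta(\mc{L}_X)$, and restrict the isomorphism of Remark \ref{iso} to obtain $\mathbf{SpLocS}\cong\mathbf{SpFrmS}^{op}$. The only difference is presentational: where you verify by hand that $\lambda_X$ is a bijection (injective by $T_0$, surjective by soberness), the paper cites \cite[Ch. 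II, Prop. 6.2]{PP} for the homeomorphism, and your explicit checks that the functors land in the correct subcategories are left implicit in the paper.
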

\begin{proof} Assume $(X,\mc{L}_X)$ is an object of $\mathbf{SobLSS}$.
Then $\lambda_X:(X,\mc{L}_X) \to (Spec(\mc{L}_X^{wo}),\Delta(\mc{L}_X))$  is a homeomorphism
by \cite[Ch. II, Prop. 6.2]{PP} and $\lambda_X(\mc{L}_X)=\Delta(\mc{L}_X)$. Hence
 $\lambda_X$ is an isomorphism in $\mathbf{SobLSS}$.

Assume $(L,L_s)$ is an object of $\mathbf{SpFrmS}$. Then,  by Fact \ref{iso-spatial},  
$\Delta_L$ is  an isomorphism of frames
and  $\Delta_L^{-1}(\Delta_L(L_s) )=L_s$, so, by Remark \ref{on}, both $\Delta_L: (L,L_s)\to (\Delta_L(L), \Delta_L(L_s) )$ and $\Delta_L^{-1}$ are dominating compatible frame homomorphisms.
Hence  $\sigma_L=(\Delta_L)_*$ is an isomorphism in $\mathbf{SpLocS}$. 

 Restricted $\sigma:\Omega\Sigma  \rightsquigarrow  Id_{\mathbf{SpLocS}}$ 
 and $\lambda: Id_{\mathbf{SobLSS}} \rightsquigarrow \Sigma\Omega$  are  natural isomorphisms. Hence
$\mathbf{SobLSS}$ and $\mathbf{SpLocS}$ are equivalent.
Similarly to Remark \ref{iso}, categories $\mathbf{SpLocS}$ and $\mathbf{SpFrmS}^{op}$ are isomorphic.
\end{proof}

\section{Hofmann-Lawson duality.}
In this section we give a new version of Theorem \ref{HL}.
\begin{lem} \label{5.5}
Let $(L,L_s)$ be an object of $\mathbf{ContFrmS}$. Then
 $(Spec(L),\Delta(L_s))$ is an object of $\mathbf{LKSobLSS}$.
\end{lem}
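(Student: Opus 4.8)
The plan is to verify that the pair $(Spec(L),\Delta_L(L_s))$ meets the definition of an object of $\mathbf{LKSobLSS}$: it must be a $T_0$ locally small space whose associated weakly open topology is locally compact and sober. Continuity of $L$ will enter the proof only at the very end, through Fact \ref{6.4.2}, so the bulk of the work is general frame-theoretic bookkeeping valid already for any sup-generating sublattice with zero.

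First I would check that $\Delta_L(L_s)$ is a smopology on $Spec(L)$. Axiom (LS1) holds since $0\in L_s$ and $\Delta_L(0)=\emptyset$; axiom (LS2) follows because $L_s$ is closed under $\wedge$ and $\vee$ while $\Delta_L$ converts these into $\cap$ and $\cup$ by Fact \ref{topol}; axiom (LS3) holds because, as $L_s$ sup-generates $L$, there is $S\subseteq L_s$ with $\bigvee S = 1$, whence $\bigcup_{s\in S}\Delta_L(s)=\Delta_L(1)=Spec(L)$.

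The central step is to identify the weakly open topology: I claim $(\Delta_L(L_s))^{wo}=\Delta_L(L)$, the hull-kernel topology. The inclusion $\subseteq$ is immediate, since $\Delta_L(L_s)\subseteq \Delta_L(L)$ and $\Delta_L(L)$ is already a topology. For $\supseteq$, given $a\in L$ write $a=\bigvee S$ with $S\subseteq L_s$ (sup-generation); then $\Delta_L(a)=\bigcup_{s\in S}\Delta_L(s)$ by the join formula of Fact \ref{topol}, exhibiting $\Delta_L(a)$ as a union of members of $\Delta_L(L_s)$, hence as a weakly open set. With this identification secured, Fact \ref{6.4.2} applies directly: since $L$ is continuous, $(Spec(L),\Delta_L(L))=(Spec(L),(\Delta_L(L_s))^{wo})$ is a sober, locally compact topological space.

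It remains to see that $(Spec(L),\Delta_L(L_s))$ is $T_0$, i.e. that $\Delta_L(L_s)$ separates points; this now comes for free. The weakly open topology is sober, hence $T_0$, and $\Delta_L(L_s)$ is by construction a basis for it, so for distinct primes $p\neq q$ any weakly open set separating them contains a basic set $\Delta_L(s)$, $s\in L_s$, that already separates them. (Equivalently, one may argue directly by decomposing a separating $\Delta_L(a)$ along $a=\bigvee S$ with $S\subseteq L_s$.) Thus $(Spec(L),\Delta_L(L_s))$ is a $T_0$ locally small space that is topologically locally compact sober, i.e. an object of $\mathbf{LKSobLSS}$. I expect the only genuinely substantive point to be the identification $(\Delta_L(L_s))^{wo}=\Delta_L(L)$; everything else is either a direct axiom check or an immediate appeal to the cited facts.
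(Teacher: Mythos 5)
Your proof is correct and takes essentially the same route as the paper: the paper's proof merely cites Step~2 of the proof of Theorem~\ref{adjunction} (where the smopology axioms are checked via Fact~\ref{topol} and, implicitly, the identification $(\Delta_L(L_s))^{wo}=\Delta_L(L)$ via sup-generation) together with Facts~\ref{sober} and~\ref{6.4.2}. Your write-up simply makes explicit the two points the paper leaves implicit --- the identification of the weakly open topology with the hull-kernel topology, and the $T_0$ check via the basis argument --- which is an expansion of the same argument, not a different method.
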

\begin{proof}  
By the proof of Theorem \ref{adjunction},  $(Spec(L),\Delta(L_s))$ is a locally small space.
By Facts \ref{sober} and \ref{6.4.2}, this space is topologically sober locally compact.
\end{proof}

\begin{lem} \label{5.6}
For an object $(X,\mc{L}_X)$ of $\mathbf{LKSobLSS}$,
 the pair $(\mc{L}_X^{wo},\mc{L}_X)$ is an object of $\mathbf{ContFrmS}$ (so also of $\mathbf{ContLocS}$).
\end{lem}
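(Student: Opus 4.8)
The plan is to show that $(\mathcal{L}_X^{wo}, \mathcal{L}_X)$ lands in $\mathbf{ContFrmS}$ by verifying two independent things: first, that the frame $\mathcal{L}_X^{wo}$ is continuous, and second, that $\mathcal{L}_X$ is a zero-containing sublattice that sup-generates this frame. The second point is essentially free: by definition $\mathcal{L}_X$ is a sublattice containing $\emptyset$ (conditions (LS1),(LS2)), and $\mathcal{L}_X^{wo}$ consists precisely of arbitrary unions of members of $\mathcal{L}_X$, so every weakly open set is a supremum (join) of small open sets. This is exactly the sup-generating condition, and it was already observed in the proof of Theorem~\ref{adjunction} that $(\mathcal{L}_X^{wo}, \mathcal{L}_X)$ is a valid object of $\mathbf{LocS}$/$\mathbf{FrmS}$.

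The substantive step is the continuity of the frame $\mathcal{L}_X^{wo}$. Since $(X,\mathcal{L}_X)$ is an object of $\mathbf{LKSobLSS}$, the topological space $(X,\mathcal{L}_X^{wo})$ is locally compact and sober by definition. I would then invoke Fact~\ref{5.1.1} directly: a sober locally compact topological space has a continuous topology-frame. Applying this with $(X,\tau_X) = (X,\mathcal{L}_X^{wo})$ gives immediately that $\tau_X = \mathcal{L}_X^{wo}$ is a continuous frame. Thus $L := \mathcal{L}_X^{wo}$ is continuous and $L_s := \mathcal{L}_X$ is a sup-generating sublattice with zero, which is precisely the definition of an object of $\mathbf{ContFrmS}$.

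The final parenthetical claim, that the pair is then also an object of $\mathbf{ContLocS}$, requires essentially no further work: $\mathbf{ContLocS}$ is the full subcategory of $\mathbf{LocS}$ on pairs whose frame is continuous, and by Remark~\ref{iso} the categories $\mathbf{FrmS}^{op}$ and $\mathbf{LocS}$ share the same objects. Since being an object of $\mathbf{ContFrmS}$ or of $\mathbf{ContLocS}$ depends only on the object $(L,L_s)$ and not on the direction of the morphisms, the same pair qualifies for both.

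I do not anticipate a genuine obstacle here; the lemma is a near-dual of Lemma~\ref{5.5} and the entire weight rests on the classical implication in Fact~\ref{5.1.1}. The only point demanding any care is the bookkeeping identification of the topological notions of local compactness and sobriety for the space $(X,\mathcal{L}_X^{wo})$ with the smopological hypotheses packaged into the definition of $\mathbf{LKSobLSS}$, but this identification is immediate from how $\mathbf{LKSobLSS}$ was defined (via the locally compact sober topological space $(X,\mathcal{L}_X^{wo})$). Hence the proof reduces to citing the definitions together with Fact~\ref{5.1.1}.
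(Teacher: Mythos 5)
Your proof is correct and follows essentially the same route as the paper: both verify that $(\mc{L}_X^{wo},\mc{L}_X)$ is a valid pair (frame plus sup-generating sublattice with zero, as already noted in the proof of Theorem~\ref{adjunction}) and then invoke Fact~\ref{5.1.1} on the sober locally compact space $(X,\mc{L}_X^{wo})$ to conclude that $\mc{L}_X^{wo}$ is a continuous frame. Your additional remark on why membership in $\mathbf{ContLocS}$ is automatic is a harmless elaboration of what the paper leaves implicit.
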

\begin{proof} 
By the proof of Theorem \ref{adjunction}, $(\mc{L}_X^{wo},\mc{L}_X)$ 
is an object of $\mathbf{SpFrmS}$. By Fact \ref{5.1.1}, $\mc{L}_X^{wo}$ is a continuous frame.  
\end{proof}

\begin{thm}[Hofmann-Lawson duality for locally small spaces]
The categories
$\mathbf{LKSobLSS}$, 
$\mathbf{ContLocS}$ and $\mathbf{ContFrmS}^{op}$
are equivalent.
\end{thm}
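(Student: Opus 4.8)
The plan is to deduce the final Hofmann-Lawson duality from the Stone-type duality (Theorem \ref{stone}) already established, using the two lemmas just proved to show that the equivalences restrict correctly to the continuous/locally-compact subcategories. The key observation is that $\mathbf{ContFrmS}$, $\mathbf{ContLocS}$, and $\mathbf{LKSobLSS}$ are defined as full subcategories of $\mathbf{SpFrmS}$, $\mathbf{SpLocS}$, and $\mathbf{SobLSS}$ respectively; indeed, every continuous frame is spatial (by the cited Fact) and every topologically locally compact sober locally small space is in particular topologically sober. So the functors $\Omega$ and $\Sigma$ and the natural isomorphisms $\sigma$, $\lambda$ from the proof of Theorem \ref{stone} are already available, and the entire task reduces to checking that they map the smaller subcategories into one another.

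First I would verify that $\Sigma$ restricts to a functor $\mathbf{ContLocS} \to \mathbf{LKSobLSS}$: for an object $(L,L_s)$ of $\mathbf{ContLocS}$, the frame $L$ is continuous, and Lemma \ref{5.5} says precisely that $\Sigma(L,L_s)=(Spec(L),\Delta(L_s))$ is an object of $\mathbf{LKSobLSS}$. Next I would verify that $\Omega$ restricts to a functor $\mathbf{LKSobLSS}\to \mathbf{ContLocS}$: for an object $(X,\mc{L}_X)$ of $\mathbf{LKSobLSS}$, Lemma \ref{5.6} gives that $(\mc{L}_X^{wo},\mc{L}_X)=\Omega(X,\mc{L}_X)$ is an object of $\mathbf{ContFrmS}$, hence of $\mathbf{ContLocS}$. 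Since morphisms in these full subcategories are inherited from the larger categories and $\Omega$, $\Sigma$ already act on those morphisms, no further checking of functoriality is needed beyond what Theorem \ref{adjunction} supplies.

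With the object-level restrictions confirmed, I would then observe that the natural isomorphisms $\sigma$ and $\lambda$ from the proof of Theorem \ref{stone} restrict to natural isomorphisms between the identity functors and the composites $\Omega\Sigma$, $\Sigma\Omega$ on the continuous subcategories, simply because their components at continuous/locally-compact objects are already isomorphisms (being isomorphisms in the ambient spatial/sober categories) and naturality is preserved under passage to full subcategories. This yields an equivalence $\mathbf{LKSobLSS}\simeq \mathbf{ContLocS}$. Finally, as in Remark \ref{iso} and the last line of the proof of Theorem \ref{stone}, the categories $\mathbf{ContLocS}$ and $\mathbf{ContFrmS}^{op}$ are isomorphic, being defined by the same objects with morphisms related by the Galois adjunction between dominating compatible frame homomorphisms and special localic maps.

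I do not expect any single step to be a serious obstacle, since the heavy lifting has been distributed into Facts from the literature (continuous $\Rightarrow$ spatial, and the two directions of the classical locally compact sober correspondence) and into Lemmas \ref{5.5} and \ref{5.6}. The only point requiring genuine care is the \emph{well-definedness of the restriction}, i.e.\ confirming that $\Omega$ and $\Sigma$ send objects of the continuous subcategories to objects of the continuous subcategories and nowhere outside them; this is exactly what the two lemmas guarantee, so once they are invoked the argument is a formal transfer of the equivalence from Theorem \ref{stone}. I would keep the proof short, citing Theorem \ref{stone}, Lemmas \ref{5.5} and \ref{5.6}, and Remark \ref{iso}, rather than rebuilding the natural transformations from scratch.
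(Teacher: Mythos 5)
Your proposal is correct and follows essentially the same route as the paper's own proof: restrict the functors $\Sigma$ and $\Omega$ using Lemmas \ref{5.5} and \ref{5.6}, restrict the natural isomorphisms $\sigma$ and $\lambda$ from Theorem \ref{stone}, and conclude with the isomorphism $\mathbf{ContLocS}\cong\mathbf{ContFrmS}^{op}$ as in Remark \ref{iso}. Your additional remark that continuity of frames implies spatiality and local compactness plus sobriety implies sobriety (so the subcategories are indeed full subcategories of those in Theorem \ref{stone}) makes explicit a point the paper leaves implicit, but the argument is the same.
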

\begin{proof} By lemmas \ref{5.5}, \ref{5.6}, the restricted functors 
$$\Sigma: \mathbf{ContLocS} \to \mathbf{LKSobLSS}, \qquad 
\Omega:\mathbf{LKSobLSS} \to \mathbf{ContLocS}$$ 
are well defined. 
 The restrictions  
 $$\lambda: Id_{\mathbf{LKSobLSS}} \rightsquigarrow \Sigma\Omega ,\qquad 
 \sigma:  \Omega\Sigma  \rightsquigarrow  Id_{\mathbf{ContLocS}}$$ 
 of natural  isomorphisms from Theorem \ref{stone} are natural isomorphisms.
 Obviously, $\mathbf{ContLocS}$ and $\mathbf{ContFrmS}^{op}$ are isomorphic.
\end{proof}

\begin{rem} Further equivalences of categories may be obtained using Exercise V-5.24 in \cite{G2} (or Exercise V-5.27 in \cite{CLD}).
\end{rem}

\begin{exam}
Consider the space $(\mb{R}, \mc{L}_{l^+om})$ from \cite[Ex. 2.14(4)]{P} where
$$\mc{L}_{l^+om}=\mbox{the finite unions of bounded from above open intervals}$$
and the following functions:
\begin{enumerate}
\item the function $-\id: \mb{R} \ni x \mapsto -x \in \mb{R}$ is  continuous but is not bounded, so $\mc{L}^{wo}(-\id):(\tau_{nat},\mc{L}_{l^+om}) \to (\tau_{nat}, \mc{L}_{l^+om})$ is a compatible frame homomorphism but is not dominating,

\item the function $\sin: \mb{R} \ni x \mapsto \sin(x) \in \mb{R}$ is bounded an weakly continuous but not  continuous, so  $\mc{L}^{wo}\sin:(\tau_{nat},\mc{L}_{l^+om}) \to (\tau_{nat}, \mc{L}_{l^+om})$ is a dominating frame homomorphism but is not compatible,

\item the function $\arctan: \mb{R} \ni x \mapsto \arctan(x) \in \mb{R}$ is bounded continuous, so  $\mc{L}^{wo}\arctan:(\tau_{nat},\mc{L}_{l^+om}) \to (\tau_{nat}, \mc{L}_{l^+om})$ is a dominating compatible frame homomorphism,

\item the function $\frac{1}{\exp}: \mb{R} \ni x \mapsto \exp(-x) \in \mb{R}$
is continuous but not bounded, so  the mapping $\mc{L}^{wo}\frac{1}{\exp}:(\tau_{nat},\mc{L}_{l^+om}) \to (\tau_{nat}, \mc{L}_{l^+om})$ is a compatible frame homomorphism but is not dominating.
\end{enumerate}
\end{exam}

\end{document}